\documentclass[12pt]{article}
\usepackage{amsthm,amsmath,amssymb,latexsym,amscd}
\usepackage{graphicx}
\setlength{\textwidth}{16cm}
\setlength{\textheight}{23cm}
\setlength{\oddsidemargin}{0cm}
\setlength{\topmargin}{-1cm}

\newcommand{\g}{\frak{g}}

\newcommand{\F}{\mathcal{F}}
\newcommand{\D}{\mathcal{D}}

\renewcommand{\P}{\mathcal{P}}

\newcommand{\Hom}{\mathrm{Hom}}

\newcommand{\End}{\mathcal{E}nd}

\newcommand{\Com}{\mathcal{C}om}

\newcommand{\Lie}{\mathcal{L}ie}
\newcommand{\Leib}{\mathcal{L}eib}

\newcommand{\Perm}{\mathcal{P}erm}
\newcommand{\Poiss}{\mathcal{P}oiss}
\renewcommand{\a}{\alpha}
\renewcommand{\b}{\beta}
\renewcommand{\d}{\mathbf{d}}
\newcommand{\p}{\prime}
\renewcommand{\c}{\circ}
\newcommand{\ot}{\otimes}
\newcommand{\s}{\mathbf{s}}

\newcommand{\vd}{\vdash}
\newcommand{\dv}{\dashv}

\newtheorem{definition}{Definition}[section]
\newtheorem{lemma}[definition]{Lemma}

\newtheorem{proposition}[definition]{Proposition}
\newtheorem{theorem}[definition]{Theorem}
\newtheorem{corollary}[definition]{Corollary}

\newtheorem{example}[definition]{Example}
\date{}

\begin{document}

\title{Derived bracket construction
and Manin products}

\author{K. UCHINO}
\maketitle
\abstract
{
We will extend the classical
derived bracket construction
to any algebra over a binary quadratic operad.
We will show that the derived product
construction is a functor given by the Manin white product
with the operad of permutation algebras.
As an application, we will show
that the operad of prePoisson algebras is isomorphic
to Manin black product of the Poisson operad with the
preLie operad.
We will show that differential operators
and Rota-Baxter operators are,
in a sense, Koszul dual to each other.
}
\footnote[0]{Mathematics Subject Classifications (2000):18D50, 53D17.}
\footnote[0]{Keywords: derived brackets, operads,
perm-algebras, Manin products, Rota-Baxter operators,
Poisson brackets.}

%%%%%%%%%%%%%%%%%%%%%%%
\section{Introduction.}
%%%%%%%%%%%%%%%%%%%%%%%

Let $(\g,[\cdot,\cdot],d)$ be a differential graded (dg) Lie algebra.
We define a new bracket product by
$[x,y]_{d}:=-(-1)^{|x|}[dx,y]$, $x,y\in\g$.
Then the new bracket becomes a Leibniz bracket
(so-called Loday bracket).
This method of constructing a new product
is called a \textbf{derived bracket}
construction of Koszul-Kosmann-Schwarzbach
(cf. Kosmann-Schwarzbach \cite{Kos1, Kos2}).
The derived bracket construction plays important roles
in modern analytical mechanics and in Poisson geometry.
It is known that several important brackets,
e.g., Poisson brackets, Schouten-Nijenhuis brackets,
Lie algebroid brackets, Courant brackets
and BV-brackets are induced by the
derived bracket construction.\\
\indent
The idea of the derived bracket construction arises in
several mathematical areas.
For instance, given a dg associative algebra $(A,d)$,
the modified products, $x\vd y:=-(-1)^{|x|}(dx)y$
and $x\dv y:=x(dy)$, are both associative.
The modified products,
which are called the \textbf{derived products},
are used in the study of Loday type algebras
(cf. Aguiar \cite{A}, Loday \cite{Lod2}).
\medskip\\
\indent
The purpose of this letter is to extend
the classical derived bracket/product construction
to any $\P$ (binary quadratic operad)-algebra .
It will be shown that the derived bracket
construction is a representation of the functor:
$$
\Perm\c(-):\P\mapsto\Perm\c\P,
$$
where $\Perm$ is the operad
governing permutation algebras (cf. Chapoton \cite{Chap})
and where $\c$ is the white product
of Manin-Ginzburg-Kapranov (so-called Main white product).
Let $\P$ be a binary quadratic operad
and let $A$ be a dg $\P$-algebra.
We regard the differential as a $1$-ary operation.
We define the derived brackets on $sA$ by
the usual manner:
\begin{eqnarray*}
\ [sx,sy]_{d}&:=&-(-1)^{|x|}s[dx,y],\\
\ [sx,sy]^{d}&:=&s[x,dy],
\end{eqnarray*}
where $s$ is a degree shifting operator
and the bracket is a multiplication of $\P$-algebra.
The derived brackets generate a new algebra structure on $sA$.
In \textbf{Theorem \ref{them1}},
we will show that the algebra generated by
the derived brackets is a $\Perm\c\P$-algebra.
To prove the theorem we apply
the detailed study for Manin products (Vallette \cite{Val}).
Vallette showed that the operad of Leibniz algebras,
$\Leib$, is isomorphic to $\Perm\c\Lie$ (Theorem 20 in \cite{Val}).
We will show that the theorem of Vallette
is the classical derived bracket construction
on the level of operad.
As an application, it will be shown that
the operad of prePoisson algebras (\cite{A}),
$pre\Poiss$, is isomorphic to the operad $pre\Lie\bullet\Poiss$,
where $pre\Lie$ is the operad of preLie algebras,
$\Poiss$ is the operad of Poisson algebras
and $\bullet$ is the Manin black product which
is the Koszul dual of the white product.\\
\indent
We will discuss a Koszul duality for
the derived bracket construction.
The differential operators satisfy the relation below.
$$
d[dx,y]=-[dx,dy]=-d[x,dy].
$$
This relation is the heart of the derived bracket construction.
We consider
the following operator identity on $\P^{!}$-algebras
instead of the derivation relation.
$$
\b(x)*\b(y)=\b(\b(x)*y+x*\b(y)),
$$
where $*$ is a multiplication of $\P^{!}$-algebra.
This $\b$ is called a Rota-Baxter operator.
It is known that some integral operators are Rota-Baxter operators.
We introduce a new concept,
the \textbf{dual-derived products},
which are multiplications
defined by the Rota-Baxter operators:
$$
\b(x)*y \ \ \text{and} \ \ x*\b(y).
$$
The original idea of the dual-derived products
was given by Aguiar in \cite{A}.
We will prove that the algebras generated by the
dual-derived products are $pre\Lie\bullet\P^{!}$-algebras
(\textbf{Theorem \ref{them2}} below).
Since $pre\Lie\bullet\P^{!}$ is the Koszul dual of $\Perm\c\P$,
the dual-derived product construction is a kind of
Koszul dual of the derived bracket construction.
Therefore, one can regard Rota-Baxter/integral operators as
Koszul dual operators of differential operators.
\medskip\\
\noindent
Acknowledgement.
The author would like to thank very
much referees for kind advice and useful comments
and also thank Professor Akira Yoshioka
for kind advice.
%%%%%%%%%%%%%%%%%%%%%%%
\section{Preliminaries}
%%%%%%%%%%%%%%%%%%%%%%%
\noindent
\textbf{Assumptions}.
The characteristic of the ground field $\mathbb{K}$ is zero.
We follow the standard Koszul sign convention.
\medskip\\
\indent
We recall the notion of (graded) operad.
For details,
see  Ginzburg-Kapranov \cite{GK,GK2} or
Markl-Shnider-Stasheff \cite{MSS}.
In Appendix, we give a short survey of Koszul duality
theory and Manin products.
\begin{example}
(endomorphism operad)
Let $V$ be a (graded) vector space.
We consider a collection of the spaces of linear endomorphisms,
$\End(V):=\{\End(V)(n)\}_{n\in\mathbb{N}}$,
$\End(V)(n):=\Hom(V^{\ot n},V)$.
For any $f\in \End(V)(m)$ and $g\in\End(V)(n)$,
we define an $i$th-composition by
$$
f\c_{i}g(x_{1},...,x_{m+n-1}):=
(-1)^{|g|(|x_{1}|+\cdot\cdot\cdot+|x_{i-1}|)}
f(x_{1},...,x_{i-1},g(x_{i},...,x_{i+n-1}),x_{i+n},...,x_{m+n-1}).
$$
where $|\cdot|$ is the degree of the object.
The composition is a binary map:
$$
\c_{i}:\End(V)(m)\ot\End(V)(n)\to\End(V)(m+n-1).
$$
The composition is associative in the sense
of (\ref{def1}) below.
\begin{eqnarray}\label{def1}
(f\c_{i}g)\c_{j}h=\left\{
\begin{array}{ll}
(-1)^{|g||h|}(f\c_{j}h)\c_{i+l-1}g & 1\le j\le i-1 \\
f\c_{i}(g\c_{j-i+1}h) & i\le j\le i+n-1 \\
(-1)^{|g||h|}(f\c_{j-n+1}h)\c_{i}g & i+n\le j \le m+n-1,
\end{array}
\right.
\end{eqnarray}
where $f\in\End(V)(m)$, $g\in\End(V)(n)$ and $h\in\End(V)(l)$.
The space $\End(V)(n)$ has a canonical $S_{n}$-module
structure defined by
$$
f\sigma(x_{1},...,x_{n}):=\epsilon(\sigma)
f(x_{\sigma^{-1}(1)},...,x_{\sigma^{-1}(n)}),
$$
where $\sigma\in S_{n}$ and $\epsilon(\sigma)$ is Koszul sign.
The composition is equivariant, that is,
\begin{equation}\label{def2}
f\sigma\c_{i}g\tau=(f\c_{\sigma^{-1}(i)}g)(\sigma\c_{i}\tau),
\end{equation}
where $\sigma\c_{i}\tau\in S_{m+n-1}$.
There exists the identity map $id:V\to V$ in $\End(V)(1)$.
The identity map is the unit element
with respect to the composition:
\begin{equation}\label{def3}
f\c_{i}id=f=id\c_{1}f.
\end{equation}
\end{example}
Let $E_{n}$ be a (graded) vector space and an $S_{n}$-module.
A collection of such spaces, $\{E_{n}\}_{n\in\mathbb{N}}$,
is called an $S$-module.
\begin{definition}
An operad is an $S$-module, $\{\P(n)\}_{n\in\mathbb{N}}$,
equipped with a collection of
compositions, $\{\c_{i}:\P(m)\ot\P(n)\to\P(m+n-1)\}$,
and the unit element $1\in\P(1)$
satisfying (\ref{def1}), (\ref{def2}) and (\ref{def3}).
\end{definition}
\begin{definition}
An operad morphism, $\phi:\P_{1}\to\P_{2}$, is
a collection of $S_{n}$-equivariant linear maps
of degree $0$,
$\{\phi(n):\P_{1}(n)\to\P_{2}(n)\}_{n\in\mathbb{N}}$,
which commute the operad composition maps
and which preserves the unit.
\end{definition}
\begin{example}(cf. \cite{GK} Example 2.1.10)
The operad of Lie algebras, $\Lie$, is an $S$-module
$\{\Lie(n)\}_{n\in\mathbb{N}}$
which is generated by a $1$-dimensional vector
space $\Lie(2):=\langle{\mu}\rangle$.
We assume $\Lie(1):=\mathbb{K}$.
The $S_{2}$-action on $\Lie(2)$ is given by
a sign representation (i.e. skewsymmetry).
The generating relation of the Lie operad
is the Jacobi identity,
$$
\mu\c_{2}\mu-\mu\c_{1}\mu-(\mu\c_{2}\mu)((12)\ot 1),
\ \ (12)\in S_{2},
$$
or explicitly,
$\mu(x_{1},\mu(x_{2},x_{3}))-\mu(\mu(x_{1},x_{2}),x_{3})-
\mu(x_{2},\mu(x_{1},x_{3}))$.
\end{example}
If $(\g,[\cdot,\cdot])$ be a Lie algebra, then
there exists a suboperad of the endomorphism operad
$\End(\g)$ which is generated
by the Lie bracket $[\cdot,\cdot]:\g\ot\g\to\g$.
This suboperad is a representation of the operad $\Lie$.
Conversely, given an operad morphism (representation)
$rep:\Lie\to\End(V)$,
$V$ becomes a Lie algebra.
\begin{definition}
Let $\P$ be an operad.
The $\P$-algebra structure is defined to be
an operad representation,
$$
rep:\P\to\End(V).
$$
\end{definition}
The notion of $\P$-algebra is defined by this way.
\begin{example}
(Leibniz algebras; $\Leib$; cf. \cite{Lod1,Lod2})
A (left) Leibniz algebra
(also called, a Loday algebra)
is a (graded) vector space
with a binary multiplication satisfying
the (left) Leibniz identity,
$$
[x,[y,z]]=[[x,y],z]+(-1)^{|x||y|}[y,[x,z]].
$$
When the bracket is skewsymmetric, it is a Lie algebra.
The operad of Leibniz algebras, $\Leib$, is generated by
a 2-dimensional vector space $\langle{\mu,\mu^{(12)}}\rangle$,
where $\mu^{(12)}$ is the transposition of $\mu$, that is,
$$
\mu^{(12)}(x_{1},x_{2})=\mu(x_{2},x_{1}).
$$
The generating relation of $\Leib$ is the same as
the relation of $\Lie$.
\end{example}
\begin{example}\label{exampleperm}
(Permutation-algebras; $\Perm$; cf. \cite{Chap,Chap2},
\cite{Val} Section 4.1)
A (graded) vector space with a binary
multiplication $(C,*)$ is called a (left) permutation
algebra (or called a perm-algebra for short),
if $*$ is associative and satisfies (\ref{relperm}) below.
\begin{equation}\label{relperm}
(x*y)*z-(-1)^{|x||y|}(y*x)*z=0,
\end{equation}
where $x,y,z\in C$.
One can check that the $n$th-component of
the operad of perm-algebras is
isomorphic to $\mathbb{K}^{n}$
for each $n\in\mathbb{N}$.
Therefore the dimension relation below holds.
$$
\dim\Perm(n)=n, \ \ \text{for each $n$}.
$$
We will use this relation in the next section.
We denote the linear basis of $\Perm(n)$ by
$\langle e^{n}_{1},...,e^{n}_{n}\rangle$.
Here $e^{n}_{j}$ corresponds with an $n$-monomial
whose right component is $x_{j}$:
$$
e^{n}_{j}\cong(x_{i_{1}}*\cdot\cdot\cdot*x_{i_{n-1}})*x_{j},
$$
where $i_{a}\in\{1,...,\check{j},...,n\}$. It is known that
the composition rule of $\Perm$ is given by
\begin{eqnarray*}
e^{m}_{i}\c_{j}e^{n}_{k}=\left\{
\begin{array}{ll}
e^{m+n-1}_{i}& i<j \\
e^{m+n-1}_{i+k-1}& j=i \\
e^{m+n-1}_{i+n-1}& i>j.
\end{array}
\right.
\end{eqnarray*}
We will use these relations in the next section.
\end{example}
\begin{example}
Let $\P_{1}$, $\P_{2}$ be operads.
Then $\P_{1}\ot\P_{2}:=\{\P_{1}(n)\ot\P_{2}(n)\}_{n\in\mathbb{N}}$
becomes an operad by a natural manner.
\end{example}
We recall the free operad.
\begin{example}
(cf. \cite{GK} Section 2.1.1, \cite{MSS} page 71)
Let $E:=(E_{1},E_{2},...)$ be an $S$-module.
We identify an element $f$ in $E_{n}$ with
the $n$-corolla
(a tree with one vertex and $n$-leaves)
whose vertex is decorated with $f$.
For instance, $f\in E_{3}$,
\begin{center}
\includegraphics[scale=0.4]{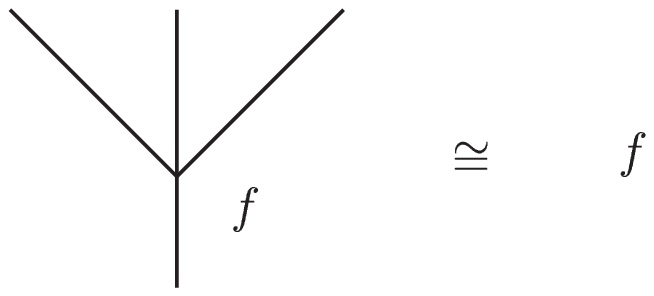}.
\end{center}
Given $f\in E(m)$, $g\in E(n)$,
grafting $g$-corolla on the $i$th-leaf of $f$-corolla,
one can define an operad composition $f\c_{i}g$,
namely, $f\c_{i}g$
is a tree whose vertices are decorated with $f$ and $g$.
For instance, if $f\in E_{3}$, $g,h\in E_{2}$, then
$(g\c_{1}f)\c_{4}h=$
\begin{center}
\includegraphics[scale=0.4]{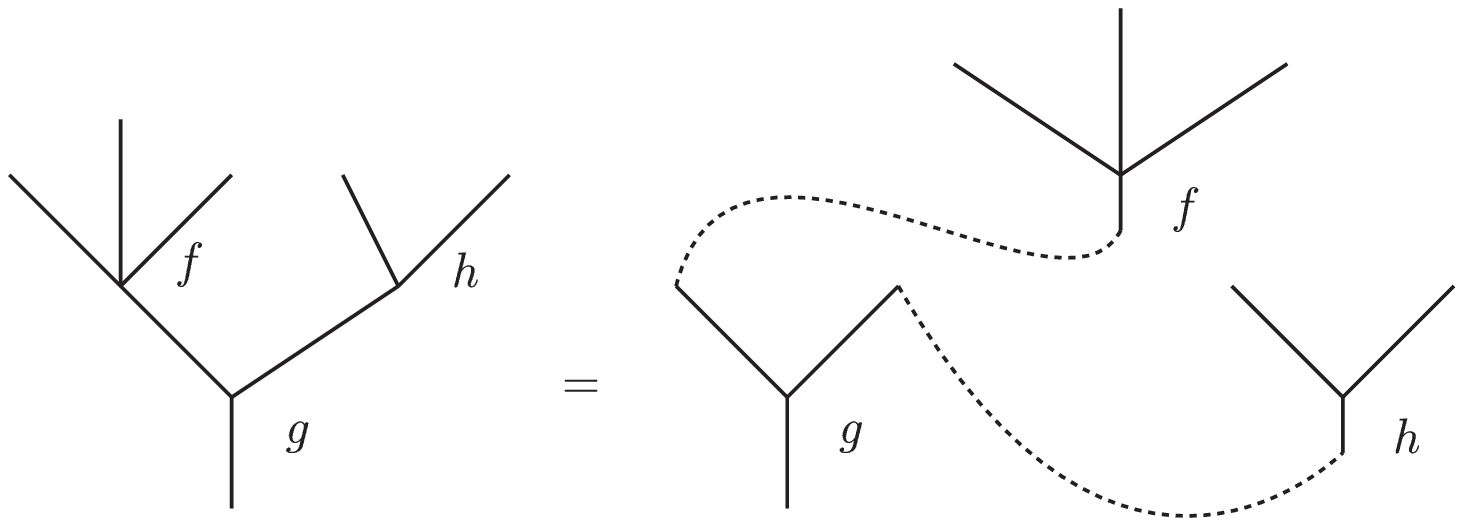}.
\end{center}
In this way, an operad,
which is denoted by $\F(E)$, is generated.
This is the free operad over the $S$-module.
Incidentally, the example $(g\c_{1}f)\c_{4}h$ is an element
in $\F(E)(5)$. 
\end{example}
Let $E$ be a (graded) vector space.
We identify $E$ with an $S$-module
such that $\{E_{2}=E, \ E_{n\neq 2}=0\}$.
We consider the free operad $\F(E)$.
A subspace $R\subset\F(E)(3)$ is called a quadratic relation,
if it is $S_{3}$-stable.
Let $R$ be a quadratic relation
and let $(R)$ be the generated operad ideal of the free operad.
The quotient operad $\P(E,R):=\F(E)/(R)$ is called
a \textbf{binary quadratic operad}. The above operads
$\Lie$, $Leib$ and $\Perm$ are binary quadratic operads.
One can show that $\F(E)(3)$ is generated by the
elements of the form:
$$
e\c_{1}e^{\p}(x_{i},x_{j},x_{k}),
$$
where $e,e^{\p}\in E$ and
$(i,j,k)\in\{(1,2,3),(3,1,2),(2,3,1)\}$.
We identify $e\c_{1}e^{\p}\cong e\ot e^{\p}$.
Then we obtain the following isomorphisms.
$$
\F(E)(3)\cong \bigoplus_{(i,j,k)}E\ot E\ot(i,j,k)
\cong3E\ot E.
$$
This gives the dimension relation for
the binary quadratic operad:
\begin{equation}\label{dimrel}
\dim \P(3)=3(\dim E)^{2}-\dim R.
\end{equation}
We will use this relation in the next section.
\medskip\\
\indent
We recall the shifted operads.
\begin{example}\label{shiftex}
(cf. \cite{GK} Definition 3.2.13; \cite{MSS} page 127)
The shifted operad of $\End(V)$ (which is denoted by $\s\End(V)$) is,
by definition, $\End(sV)$, where $s$ is the degree shifting operator
of degree $|s|=+1$.
An arbitrary element $\s f\in\s\End(V)(n)$ has the form,
$\s f=sf(s^{-1}\ot...\ot s^{-1})$.
One can regard $s^{-1}\ot...\ot s^{-1}$
as a one dimensional sign representation of $S_{n}$.
Thus we obtain
$$
sf(s^{-1}\ot...\ot s^{-1})\cong f[1-n]\ot{sgn(n)},
$$
where $f[1-n]$ is a copy of $f$ with degree $|f|+1-n$
and $sgn(n)$ is the one dimensional representation space.
\end{example}
Given an operad $\P$, the shifted operad $\s\P$
is defined by $\s\P:=\P[1-\cdot]\ot{sgn}$
in general,
where $sgn=\{sgn(n)\}_{n\in\mathbb{N}}$.
%%%%%%%%%%%%%%%%%%%%%%
\section{Main results}
%%%%%%%%%%%%%%%%%%%%%%
%%%%%%%%%%%%%%%%%%%%%%%%%%%%%
\subsection{Derived brackets}
%%%%%%%%%%%%%%%%%%%%%%%%%%%%%
Let $\P=\P(E,R)$ be a binary quadratic operad
and let $(A,d)$ be a dg $\P$-algebra.
We assume that $E$ is a homogeneous space
of degree zero (or even) and assume that $|d|=+1$ (or odd).
\begin{definition}
The derived brackets on $sA$ are defined to be
the following binary multiplications,
\begin{eqnarray*}
\ [sx_{1},sx_{2}]_{d}&:=&-(-1)^{|x_{1}|}s[dx_{1},x_{2}],\\
\ [sx_{1},sx_{2}]^{d}&:=&s[x_{1},dx_{2}],
\end{eqnarray*}
where $[\cdot,\cdot]$ is a $\P$-algebra multiplication
and $sx_{1},sx_{2}\in sA$, $x_{1},x_{2}\in A$.
\end{definition}
The derived brackets are given by
\begin{eqnarray*}
\ [\cdot,\cdot]_{d}&:=&s[\cdot,\cdot](d\ot 1)(s^{-1}\ot s^{-1}),\\
\ [\cdot,\cdot]^{d}&:=&-s[\cdot,\cdot](1\ot d)(s^{-1}\ot s^{-1}),
\end{eqnarray*}
on the level of the endomorphism operad.
If $[\cdot,\cdot]$ is (anti-)commutative, then $[\cdot,\cdot]^{d}$
is the (anti-)transposition of $[\cdot,\cdot]_{d}$, namely,
$$
[sx_{1},sx_{2}]^{d}=
(\pm 1)(-1)^{(|x_{1}|+1)(|x_{2}|+1)}[sx_{2},sx_{1}]_{d}.
$$
\noindent
Let $\mu$ be an $n$-monomial of $\P$-algebra.
It satisfies the derivation property,
$$
d\mu(x_{1},...,x_{n})=\sum(\pm)
\mu(x_{1},...,x_{i-1},dx_{i},x_{i+1},...,x_{n}),
$$
where $\pm$ is an appropriate sign.
Hence the monomials composed of the (nonshifted) derived brackets
are generated by the monomials of the form,
$$
\mu\c\d^{n}_{i}:=\mu(d\ot...\ot d\ot 1_{(ith)}\ot d\ot...\ot d).
$$
For instance, $d[\cdot,[d[\cdot,\cdot],d(\cdot)]]
=[d(\cdot),[[d(\cdot),\cdot],d(\cdot)]]
+[d(\cdot),[[\cdot,d(\cdot)],d(\cdot)]]=$
$$
=[\cdot,[[\cdot,\cdot],\cdot]]
(d\ot d\ot 1\ot d)
+[\cdot,[[\cdot,\cdot],\cdot]](d\ot 1\ot d\ot d).
$$
Our main result is the following.
\begin{theorem}\label{them1}
The algebra of the derived brackets is
a $\Perm\c\P$-algebra.
\end{theorem}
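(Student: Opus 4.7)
The plan is to exhibit an operad morphism $\rho : \Perm \c \P \to \s\End(A)$ whose image is the suboperad generated by all the derived brackets, modelled on Vallette's identification $\Perm \c \Lie \cong \Leib$. Since $\dim \Perm(2) = 2$ with basis $\{e^{2}_{1},e^{2}_{2}\}$, the generating $S$-module of $\Perm \c \P$ is $\Perm(2) \ot E$, which matches the two derived brackets $[\cdot,\cdot]_{d}$ and $[\cdot,\cdot]^{d}$ associated to each generator $\mu \in E$. On generators I would set $\rho(e^{2}_{2} \ot \mu) := [\cdot,\cdot]_{d}$ (with underlying bracket $\mu$) and $\rho(e^{2}_{1} \ot \mu) := [\cdot,\cdot]^{d}$ (with underlying bracket $\mu$), fixing signs to agree with the Koszul convention for the shift $s$.

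The heart of the argument is checking that the defining quadratic relations of $\Perm \c \P$ are annihilated by $\rho$. By the construction of the Manin white product, these relations fall into two families: (i) the $\P$-relations $R$, promoted via the $\Perm$ composition rules spelled out in Example \ref{exampleperm}; and (ii) the perm relation $(x*y)*z - (-1)^{|x||y|}(y*x)*z = 0$, promoted via the tensor structure on $\P$-operations. For family (i), I would use the derivation property $d\mu(x_{1},\ldots,x_{n}) = \sum \pm \mu(\ldots,dx_{i},\ldots)$ together with $d^{2} = 0$: a depth-two monomial in derived brackets applied to three arguments unpacks into a sum of $\P$-monomials of the form $\mu \c \d^{3}_{i}$, and the quadratic $\P$-relation in $R$ reassembles on the surviving terms (with any configuration placing a $d$ on the distinguished slot killed by $d^{2}=0$). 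For family (ii), both $[[sx_{1},sx_{2}]_{d},sx_{3}]_{d}$ and its transposition $[[sx_{2},sx_{1}]_{d},sx_{3}]_{d}$ reduce to the same $\P$-monomial $s[d[dx_{?},x_{?}],x_{3}]$ up to Koszul sign, since the only surviving term in each expansion has $d$ on both inner arguments and no $d$ on the outer right slot.

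Surjectivity of $\rho$ onto the operad generated by the derived brackets is immediate from the choice of generators. Well-definedness on the quotient $\Perm \c \P$ — rather than on some strictly larger quotient of $\F(\Perm(2) \ot E)$ — is then secured by a dimension count in arity three via (\ref{dimrel}), or more structurally by appeal to Vallette's machinery \cite{Val}, which guarantees that the two families of relations verified above are exactly those cutting out $\Perm \c \P$. The main technical obstacle is the Koszul sign bookkeeping: since $d$ is odd and $s$ shifts parity, each migration of a $d$ across a $\P$-operation generates a sign that must be matched against the signs produced by the $\Perm$ composition rules and the operadic axiom (\ref{def1}). Carrying this out uniformly across all binary quadratic $\P$ is precisely the generalization of Vallette's $\Leib \cong \Perm \c \Lie$ computation promised by the statement.
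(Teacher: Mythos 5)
Your family (i) is fine, but family (ii) is not a relation of $\Perm\c\P$ and does not hold for the derived brackets, so the proof as written has a genuine gap. Unwinding the definitions, $[[sx_{1},sx_{2}]_{d},sx_{3}]_{d}=\pm\, s[d[dx_{1},x_{2}],x_{3}]=\pm\, s[[dx_{1},dx_{2}],x_{3}]$, while $[[sx_{2},sx_{1}]_{d},sx_{3}]_{d}=\pm\, s[[dx_{2},dx_{1}],x_{3}]$; these are \emph{different} $\P$-monomials unless the generator $\mu$ happens to be (anti)symmetric, so they do not agree ``up to Koszul sign'' for a general binary quadratic $\P$. A concrete counterexample is $\P=\Ass$: there $\Perm\c\Ass\cong\Dias$ and the derived products $x\vd y=\pm(dx)y$ certainly do not satisfy $(x\vd y)\vd z=\pm(y\vd x)\vd z$. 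The relations of the white product are not the two naive families you list (perm relations in one slot, $\P$-relations in the other); they are the preimage $\Phi^{-1}\big(R_{\Perm}\ot\F(E_{\P})(3)+\F(E_{\Perm})(3)\ot R_{\P}\big)$, and in derived-bracket language the correct second family is the \emph{averaging-type} relations mixing the two brackets at the inner spot, $[\,[sx_{i},sx_{j}]_{d},sx_{k}]_{d}$ versus $[\,[sx_{i},sx_{j}]^{d},sx_{k}]_{d}$, which do hold because $d[dx,y]=\pm[dx,dy]=\pm d[x,dy]$. These are exactly the relations $Ave$ of Section 3.2, of dimension $3(\dim E)^{2}$, and only together with the three $d$-decorated copies of $R$ do you reach the required dimension $3(\dim E)^{2}+3\dim R$ from (\ref{dimrel}); with your false family (ii) the dimension count in your last paragraph cannot close the argument. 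Repairing your route along these lines is possible, but it amounts to Proposition \ref{keyalpha}, which the paper proves separately.

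Note also that the paper's own proof of Theorem \ref{them1} avoids checking quadratic relations altogether: it invokes Vallette's isomorphism $\Perm\c\P\cong\Perm\ot\P$ and then writes down an explicit operad morphism in \emph{every} arity, $\phi(n):e^{n}_{i}\ot\mu\mapsto\pm\,\s(\mu\c\d^{n}_{i})$, whose compatibility with the composition rules of $\Perm$ in Example \ref{exampleperm} follows from the derivation property of $d$ together with $d^{2}=0$. If you prefer your generators-and-relations strategy, replace family (ii) by the $Ave$ relations and then justify, by the dimension count against (\ref{dimrel}), that these together with the three copies of $R$ exhaust the relations of $\Perm\c\P$; otherwise, switch to the all-arity morphism, where the only verification needed is the composition check above.
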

Vallette showed $\Perm\ot\P\cong\Perm\c\P$ in \cite{Val} Proposition 15
(See Appendix for the definition of Manin products).
So it suffices to show that $sA$ becomes
a $\Perm\ot\P$-algebra.
\begin{proof}
The monomials composed of the (shifted) derived brackets
are generated by the monomials of the form,
$$
\s(\mu\c\d^{n}_{i})=s\mu(d\ot...\ot d\ot 1\ot
d\ot...\ot d)(s^{-1}\ot...\ot s^{-1}).
$$
We remark that the degree of $\s(t\c\d^{n}_{i})$ is $0$.
We have to mix $d$ with $s^{-1}$, that is,
$$
\pm\s(\mu\c\d^{n}_{i})=s\mu(ds^{-1}\ot...\ot ds^{-1}\ot s^{-1}\ot
ds^{-1}...\ot ds^{-1}),
$$
where $\pm=(-1)^{n(n-1)/2+(i-1)}$.
We define maps,
$$
\Perm\ot\P\overset{1\ot rep}{\longrightarrow}
\Perm\ot\End(A)\overset{\phi}{\longrightarrow}\s\End(A),
$$
where $\phi$ is the collection of $\phi(n)$,
$$
\phi(n):e^{n}_{i}\ot \mu\mapsto\pm\s(\mu\c\d^{n}_{i}).
$$
The map $\phi$ preserves
the composition relations of $\Perm$ in Example \ref{exampleperm},
namely, the following relations hold.
\begin{eqnarray*}
\pm\s(\mu\c\d^{m}_{i})\c_{j}\pm\s(\mu^{\p}\c\d^{n}_{k})
=\left\{
\begin{array}{ll}
\pm\s(\mu\c_{j}\mu^{\p}\c\d^{m+n-1}_{i})& i<j \\
\pm\s(\mu\c_{j}\mu^{\p}\c\d^{m+n-1}_{i+k-1})& i=j \\
\pm\s(\mu\c_{j}\mu^{\p}\c\d^{m+n-1}_{i+n-1})& i>j.
\end{array}
\right.
\end{eqnarray*}
This implies that $\phi$ is an operad morphism.
Therefore $\phi\c(1\ot rep)$ is an operad representation.
The proof of the theorem is completed.
\end{proof}
The proof above shows that the composition
of the operad $\Perm$ corresponds to the derivation
property.
However the converse of this correspondence
is not clear.
In the next subsection we will show that
the quadratic relations of $\Perm\c\P$
are completely determined by
the properties that the derived brackets satisfy.
\medskip\\
\indent
We consider the derived brackets on abelian subalgebras.
Let $\g$ be a dg-Lie algebra.
If $\g^{\p}\subset\g$ is an abelian (trivial)
subalgebra of $\g$ (not dg subalgebra)
and if $s\g^{\p}$ is closed under the
derived bracket, then
the derived bracket is Lie on $s\g^{\p}$.
This proposition is one of the basic propositions
of the classical derived bracket construction (see \cite{Kos1}).
We should state an operadic version of this proposition.
\begin{proposition}\label{absubal}
Let $A$ be a dg $\P$-algebra
with a trivial subalgebra $A^{\p}$ (not dg subalgebra).
If $sA^{\p}$ is closed under the derived brackets,
then $sA^{\p}$ is a $\P$-algebra.
\end{proposition}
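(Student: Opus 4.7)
My strategy is to use the triviality of $A^{\p}$ to force the two derived brackets to coincide on $sA^{\p}$, thereby collapsing the $\Perm$-factor of the $\Perm\c\P$-structure from Theorem \ref{them1} down to $\Com$; the canonical identification $\Com\ot\P\cong\P$ will then exhibit $sA^{\p}$ as a $\P$-algebra. To establish the coincidence: for any binary generator $[\cdot,\cdot]\in E$ and $x_{1},x_{2}\in A^{\p}$, triviality gives $[x_{1},x_{2}]=0$, so
$$0=d[x_{1},x_{2}]=[dx_{1},x_{2}]+(-1)^{|x_{1}|}[x_{1},dx_{2}],$$
whence $-(-1)^{|x_{1}|}[dx_{1},x_{2}]=[x_{1},dx_{2}]$ and therefore
$$[sx_{1},sx_{2}]_{d}=-(-1)^{|x_{1}|}s[dx_{1},x_{2}]=s[x_{1},dx_{2}]=[sx_{1},sx_{2}]^{d}$$
on $sA^{\p}$.

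Invoking Theorem \ref{them1} and Vallette's isomorphism $\Perm\c\P\cong\Perm\ot\P$, the closure hypothesis makes $sA^{\p}$ a $\Perm\ot\P$-subalgebra of $sA$ via a representation $\rho:\Perm\ot\P\to\End(sA^{\p})$ satisfying (up to sign) $\rho(e^{2}_{1}\ot e)=[\cdot,\cdot]_{d}$ and $\rho(e^{2}_{2}\ot e)=[\cdot,\cdot]^{d}$ for each $e\in E$; the coincidence established above gives $\rho((e^{2}_{1}-e^{2}_{2})\ot e)=0$ for every $e\in E$. Let $J\subset\Perm$ denote the operad ideal generated by $e^{2}_{1}-e^{2}_{2}$; adding the relation $x*y=y*x$ to the perm and associativity relations produces commutative associative algebras, so $\Perm/J\cong\Com$, and $\Com\ot\P\cong\P$ canonically since $\Com(n)=\mathbb{K}$ carries the trivial $S_{n}$-action. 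Thus, provided the operad ideal of $\Perm\ot\P$ generated by the binary family $\{(e^{2}_{1}-e^{2}_{2})\ot e\}_{e\in E}$ coincides with $J\ot\P$, the representation $\rho$ factors as
$$\Perm\ot\P\twoheadrightarrow(\Perm/J)\ot\P\cong\Com\ot\P\cong\P\longrightarrow\End(sA^{\p}),$$
exhibiting $sA^{\p}$ as a $\P$-algebra.

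The main obstacle is the identification of these two ideals in $\Perm\ot\P$. Since $\Perm\ot\P$ is binary quadratic and the operad composition respects the tensor factorization, the question reduces to a direct combinatorial check using the composition rules of $\Perm$ recorded in Example \ref{exampleperm}: one must show that, starting from the binary pure tensors $(e^{2}_{1}-e^{2}_{2})\ot e$ and closing under operadic composition in $\Perm\ot\P$, one recovers exactly $J\ot\P$. This should follow because $J$ itself is operad-generated in $\Perm$ by the single element $e^{2}_{1}-e^{2}_{2}$, and the Hadamard tensor product propagates generating relations factor-by-factor; the only delicate point is matching the position of composition on the $\P$-factor with that on the $\Perm$-factor, which is handled by the binary quadraticity of $\P$.
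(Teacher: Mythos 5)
Your strategy is the same as the paper's: derive from triviality of $A^{\p}$ the coincidence of the two derived brackets on $sA^{\p}$ (the paper records exactly this as $s[\cdot,\cdot](ds^{-1}\ot s^{-1})=s[\cdot,\cdot](s^{-1}\ot ds^{-1})$, and your sign computation via $0=d[x_{1},x_{2}]$ is a correct unpacking of it), then collapse the $\Perm$-factor to $\Com$ through the projection $pr:\Perm\to\Com$ and conclude with $\Com\ot\P\cong\P$. So there is no methodological divergence; the problem is that your write-up is left conditional at its central step. You reduce the factorization to the claim that the operad ideal of $\Perm\ot\P$ generated by $\{(e^{2}_{1}-e^{2}_{2})\ot e\}_{e\in E}$ coincides with $J\ot\P$, and you only assert that this ``should follow''. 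As written, the proof is incomplete at precisely the point the paper dispatches with ``this implies'' (the paper is terse there too, but it does not hang its argument on an unproved ideal identity).

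Two remarks on closing the gap. First, the ideal identity is stronger than what you need: to factor the representation $\rho:\Perm\ot\P\to\End(sA^{\p})$ through $\Com\ot\P$ it suffices that $\rho$ vanish on $\Ker(pr\ot 1)$, whose arity-$n$ component is spanned by the elements $(e^{n}_{i}-e^{n}_{j})\ot\mu$ with $\mu$ a monomial of $\P(n)$. Second, this vanishing follows by induction on $n$ from your binary identity: write $\mu=\mu^{\p}\c_{l}\mu^{\p\p}$, realize $e^{n}_{i}=e^{m}_{a}\c_{l}e^{n^{\p}}_{b}$ and $e^{n}_{j}=e^{m}_{a^{\p}}\c_{l}e^{n^{\p}}_{b^{\p}}$ using the composition rules of Example \ref{exampleperm}, and use that $\rho$ is an operad morphism (the composition relations verified in the proof of Theorem \ref{them1}) together with the closure hypothesis, which guarantees that all operations $\rho(e^{n}_{i}\ot\mu)$ preserve $sA^{\p}$, so that restriction to $sA^{\p}$ commutes with operadic composition; the inductive hypothesis then lets you exchange $a,b$ for $a^{\p},b^{\p}$ factorwise. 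With such a paragraph inserted (or, alternatively, with an actual proof that $\Ker(pr\ot 1)$ is generated by its binary part), your argument becomes a faithful and somewhat more explicit version of the paper's proof.
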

\begin{proof}
The commutative associative algebras are
obviously permutation-algebras.
Hence there exists an operadic projection
$pr:\Perm\to\Com$, where $\Com$
is the operad of commutative associative algebras.
Since $A^{\p}$ is a trivial subalgebra,
the derived brackets satisfy the following
relation on $sA^{\p}$.
$$
s[\cdot,\cdot](ds^{-1}\ot s^{-1})=s[\cdot,\cdot](s^{-1}\ot ds^{-1}).
$$
This implies that the representation of $\Perm\ot\P$ on $sA^{\p}$
factors through $\Com\ot\P$,
namely, the following diagram is commutative.
$$
\begin{CD}
\Perm\ot\P@>{rep}>>\End(sA^{\p}) \\
@V{pr\ot 1}VV @| \\
\Com\ot\P@>{rep^{\p}}>>\End(sA^{\p}),
\end{CD}
$$
where $rep^{\p}$ is the reduced representation.
It is known that $\Com(n)\cong\mathbb{K}$
for each $n\in\mathbb{N}$.
This gives $\Com\ot\P\cong\P$.
Therefore $sA^{\p}$ becomes a $\P$-algebra.
\end{proof}
%%%%%%%%%%%%%%%%%%%%%%%%%%%%%%%%%
\subsection{Free derived bracket}
%%%%%%%%%%%%%%%%%%%%%%%%%%%%%%%%%
We reconstruct $\Perm\c\P$ in terms of
the derived bracket construction.
First, we give a corollary of Theorem \ref{them1}.
\begin{corollary}
(Aguiar's derived brackets; cf. \cite{A})
Let $(A,\a)$ be a $\P$-algebra with
a linear endomorphism $\a:A\to A$.
Assume that $\a$ satisfies Rota's averaging relation:
$$
\a[\a x_{1},x_{2}]=[\a x_{1},\a x_{2}]=\a[x_{1},\a x_{2}].
$$
Aguiar defined the following modified brackets
(derived brackets),
$$
[\a x_{1},x_{2}] \ \ \text{and} \ \ [x_{1},\a x_{2}].
$$
The algebra of Aguiar's derived brackets is a $\Perm\c\P$-algebra.
\end{corollary}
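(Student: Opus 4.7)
The plan is to mimic the proof of Theorem~\ref{them1}, replacing $d$ by $\a$ throughout and substituting Rota's averaging identity for the combination ``derivation property plus $d^{2}=0$''. Since $\a$ has degree zero, no degree-shift $\s$ is required and all signs disappear, so the construction produces a map directly into $\End(A)$ rather than into $\s\End(A)$. Explicitly, I would define
$$
\psi:\Perm\ot\P\longrightarrow\End(A),\qquad
\psi(e^{n}_{i}\ot\mu):=\mu\c(\a^{\ot(i-1)}\ot 1\ot\a^{\ot(n-i)}),
$$
where on the right-hand side $\mu$ is identified with its image under $rep:\P\to\End(A)$. I would then check that $\psi$ is an operad morphism, and conclude via Vallette's isomorphism $\Perm\ot\P\cong\Perm\c\P$ that $A$ carries a $\Perm\c\P$-algebra structure.

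Verifying that $\psi$ respects the operad composition reduces, as in the proof of Theorem~\ref{them1}, to checking the three cases of the $\Perm$ composition rule from Example~\ref{exampleperm}. The case $j=i$ is immediate: the output of $\mu^{\p}\c(\a^{\ot(k-1)}\ot 1\ot\a^{\ot(n-k)})$ is fed into the unique non-$\a$ position of $\mu$, producing the expected composite without any use of the averaging identity. The cases $j<i$ and $j>i$, however, place the substituted operation into an $\a$-slot of $\mu$, and the verification reduces to the key lemma
$$
\a\bigl(\mu^{\p}(\a x_{1},\ldots,x_{k},\ldots,\a x_{n})\bigr)
=\mu^{\p}(\a x_{1},\ldots,\a x_{n}),
$$
valid for every $n$-ary $\P$-monomial $\mu^{\p}$ and every slot $k$. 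This identity is the exact analogue of the calculation $d\mu^{\p}(dx_{1},\ldots,x_{k},\ldots,dx_{n})=\mu^{\p}(dx_{1},\ldots,dx_{n})$ used in the proof of Theorem~\ref{them1}.

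I would prove the key lemma by induction on the arity of $\mu^{\p}$, carried out jointly with the auxiliary statement that $\mu^{\p}(\a x_{1},\ldots,\a x_{n})\in\Im(\a)$ for every $\mu^{\p}$. The base case (arity two) is the two forms of Rota's averaging identity, which directly give both claims. For the inductive step, I write $\mu^{\p}=\mu_{1}\c_{l}\mu_{2}$ and split according to whether the distinguished slot $k$ lies inside $\mu_{2}$'s range. If $k$ lies inside $\mu_{2}$, one uses the inductive key lemma first to push $\a$ past $\mu_{1}$ at slot $l$, and then past $\mu_{2}$ at slot $k$ (viewed inside $\mu_{2}$). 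If $k$ lies outside $\mu_{2}$'s range, one first uses the auxiliary statement for $\mu_{2}$ to write $\mu_{2}(\a\cdot,\ldots,\a\cdot)=\a w$ for some $w\in A$, and then applies the inductive key lemma to $\mu_{1}$ at slot $k$; substituting $\a w$ back recovers the desired right-hand side.

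The main obstacle is that the key lemma alone is not self-sustaining: the ``$k$ outside $\mu_{2}$'' sub-case genuinely requires the auxiliary ``image of $\a$'' statement for $\mu_{2}$, and this statement does not follow from the averaging identity in isolation; nor can it be obtained from idempotence of $\a$, which we do not assume. Carrying the two statements together through the induction makes both proceed without circularity, and once they are in hand the remainder of the argument is a mechanical transcription of the composition-compatibility calculation from the proof of Theorem~\ref{them1}, now with all shifts and signs trivial.
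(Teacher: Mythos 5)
Your proposal is correct and follows essentially the same route as the paper, whose proof is simply the observation that the averaging relation plays the role of the differential relation $d[dx_{1},x_{2}]=-[dx_{1},dx_{2}]=-d[x_{1},dx_{2}]$ so that the proof of Theorem \ref{them1} transfers verbatim (without the shift $\s$ or signs, since $|\a|=0$). The only difference is that you spell out what the paper leaves implicit, namely the joint induction showing that the binary averaging identity propagates to all $n$-ary monomials (your key lemma together with the auxiliary $\Im(\a)$ statement), and that filling-in is accurate.
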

\begin{proof}
The averaging relation corresponds to
the differential relation:
$d[dx_{1},x_{2}]=-[dx_{1},dx_{2}]=-d[x_{1},dx_{2}]$,
up to shift.
Therefore the corollary is shown in a similar manner.
\end{proof}
Let $\P$ be a binary quadratic operad over $(E,R)$.
Since $\dim\Perm(2)=2$, $\Perm(2)\ot\P(2)$ is
2-copies of $E$. We put
\begin{equation}\label{def2e}
\Perm(2)\ot\P(2):=\langle{
[\a x_{1},x_{2}], [x_{1},\a x_{2}] \ | \ [x_{1},x_{2}]\in E\ 
}\rangle,
\end{equation}
where $\alpha$ (left and right) is a label on copies
(not averaging operator).
We define an $S_{2}$-module structure on
$\Perm(2)\ot\P(2)$ by the natural manner:
$$
[\a x_{1},x_{2}](12)=[\a x_{2},x_{1}]=[x_{1},\a x_{2}]^{(12)},
$$
where $(12)\in S_{2}$
and $[\cdot,\cdot]^{(12)}\in E$ is the transposition of $[\cdot,\cdot]$.
One can regard $[\a x_{1},x_{2}]$ and $[x_{1},\a x_{2}]$
as binary corollas whose leaves are decorated with symbol $\a$
and whose vertices are decorated with $[x_{1},x_{2}]$:
\begin{center}
\includegraphics[scale=0.4]{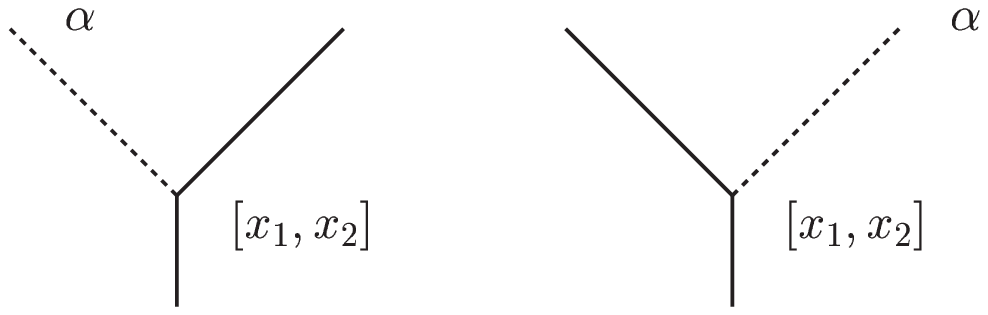},
\end{center}
where $\a$-edges are drew as broken lines.
We consider the free operad $\F(\Perm(2)\ot\P(2))$
and the quadratic relation below.
\begin{equation}\label{keyidentity}
[\a[\a x_{i},x_{j}],x_{k}]^{\p}
-[\a[x_{i},\a x_{j}],x_{k}]^{\p}\equiv 0,
\end{equation}
where $[\cdot,\cdot],[\cdot,\cdot]^{\p}\in E$
and $(i,j,k)\in\{(1,2,3),(3,1,2),(2,3,1)\}$.
Let $Ave$ denote the space of
relations generated by (\ref{keyidentity}).
We set the 3-copies of $\F(E)(3)$,
$$
3\F(E)(3):=\Big(\F(E)(3)\bar{\c}_{2}\a\bar{\c}_{3}\a\Big)
\oplus\Big(\F(E)(3)\bar{\c}_{3}\a\bar{\c}_{1}\a\Big)
\oplus\Big(\F(E)(3)\bar{\c}_{1}\a\bar{\c}_{2}\a\Big),
$$
where $\bar{\c}_{i}\a\bar{\c}_{j}\a$ are labels on copies.
We define a natural $S_{3}$-module structure on $3\F(E)$
such that a map $\theta$ below is equivariant.
The map $\theta$ is defined to be
a surjection $\theta:\F(\Perm(2)\ot\P(2))(3)\to 3\F(E)(3)$ by
\begin{eqnarray*}
\ [\a[\a x_{i},x_{j}],x_{k}]^{\p}&\mapsto&
[[x_{i},x_{j}],x_{k}]^{\p}\bar{\c}_{i}\a\bar{\c}_{j}\a,\\
\ [\a[x_{i},\a x_{j}],x_{k}]^{\p}&\mapsto&
[[x_{i},x_{j}],x_{k}]^{\p}\bar{\c}_{i}\a\bar{\c}_{j}\a,\\
\ [[\a x_{i},x_{j}],\a x_{k}]^{\p}&\mapsto&
[[x_{i},x_{j}],x_{k}]^{\p}\bar{\c}_{k}\a\bar{\c}_{i}\a,\\
\ [[x_{i},\a x_{j}],\a x_{k}]^{\p}&\mapsto&
[[x_{i},x_{j}],x_{k}]^{\p}\bar{\c}_{j}\a\bar{\c}_{k}\a,
\end{eqnarray*}
where $(i,j,k)\in\{(1,2,3),(3,1,2),(2,3,1)\}$.
The relation $Ave$ is the kernel of $\theta$.
\begin{lemma}
The dimension of $\theta^{-1}(3R)$
is equal to $3(\dim E)^{2}+3\dim R$, and
it coincides with the dimension
of the space of quadratic relations of $\Perm\c\P$.
\end{lemma}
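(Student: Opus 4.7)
The plan is a straightforward dimension count that compares two quantities on the same generating space $E' := \Perm(2)\ot\P(2)$. First I would unpack the dimensions appearing in the definition of $\theta$. From (\ref{def2e}) and $\dim\Perm(2)=2$, we have $\dim E' = 2\dim E$. Applied to the free operad over $E'$, formula (\ref{dimrel}) (in the case $R=0$) yields $\dim\F(E')(3) = 3(\dim E')^{2} = 12(\dim E)^{2}$, while by construction $\dim 3\F(E)(3) = 3\cdot 3(\dim E)^{2} = 9(\dim E)^{2}$. Thus, once $\theta$ is known to be surjective, we get $\dim Ave = \dim\Ker\theta = 12(\dim E)^{2} - 9(\dim E)^{2} = 3(\dim E)^{2}$, and hence
\[
\dim\theta^{-1}(3R) = \dim Ave + \dim 3R = 3(\dim E)^{2} + 3\dim R.
\]

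Second, I would verify the surjectivity of $\theta$ by inspection of the explicit formulas. As $(i,j,k)$ ranges over $\{(1,2,3),(3,1,2),(2,3,1)\}$, the four assignments listed in the definition produce labels $\bar{\c}_{i}\a\bar{\c}_{j}\a$, $\bar{\c}_{k}\a\bar{\c}_{i}\a$, and $\bar{\c}_{j}\a\bar{\c}_{k}\a$, which one checks exhaust the three summands of $3\F(E)(3)$; the two assignments landing in $\bar{\c}_{i}\a\bar{\c}_{j}\a$ are precisely where the relation (\ref{keyidentity}) lives and cut out $\Ker\theta$. Combined with the $S_{3}$-equivariance built into $3\F(E)$, this fixes $\dim Ave$.

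Third, for the second equality in the lemma I would invoke Vallette's identification $\Perm\c\P\cong\Perm\ot\P$ (used already in the proof of Theorem \ref{them1}) to compute
\[
\dim(\Perm\c\P)(3) = \dim\Perm(3)\cdot\dim\P(3) = 3\bigl(3(\dim E)^{2} - \dim R\bigr) = 9(\dim E)^{2} - 3\dim R.
\]
Since $\Perm\c\P$ is itself a binary quadratic operad generated by $(\Perm\c\P)(2) = \Perm(2)\ot\P(2) = E'$, applying (\ref{dimrel}) once more to the space $R'$ of its quadratic relations gives
\[
\dim R' = 3(\dim E')^{2} - \dim(\Perm\c\P)(3) = 12(\dim E)^{2} - 9(\dim E)^{2} + 3\dim R = 3(\dim E)^{2} + 3\dim R,
\]
matching $\dim\theta^{-1}(3R)$ as required.

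The computational steps are routine; the only subtle point is the surjectivity of $\theta$ together with the identification of its kernel as $Ave$. This is exactly where the bookkeeping over triples $(i,j,k)$ and the three labels $\bar{\c}_{a}\a\bar{\c}_{b}\a$ has to be done carefully, and I expect it to be the main (though modest) obstacle; once in hand, the lemma reduces to the two applications of the dimension formula (\ref{dimrel}) together with Vallette's white-product formula.
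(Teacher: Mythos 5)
Your proposal is correct and follows essentially the same route as the paper: compute $\dim Ave$ (hence $\dim\theta^{-1}(3R)=3(\dim E)^{2}+3\dim R$, since $Ave=\Ker\theta$), then use $\Perm\c\P\cong\Perm\ot\P$, $\dim\Perm(3)=3$ and the dimension formula (\ref{dimrel}) to get the same number for the quadratic relations of $\Perm\c\P$. The only difference is cosmetic: where the paper asserts $\dim Ave=3(\dim E)^{2}$ as an easy check, you obtain it via surjectivity of $\theta$ and rank--nullity, which is a legitimate way to carry out that check.
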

\begin{proof}
One can easily check that $\dim Ave=3(\dim E)^{2}$.
This gives the dimension relation
$\dim \theta^{-1}(3R)=3(\dim E)^{2}+3\dim R$.
Since $\Perm\c\P\cong\Perm\ot\P$
and $\dim\Perm(3)=3$,
from (\ref{dimrel}), we obtain
$$
\dim R_{\Perm\c\P}=
3(\dim E)^{2}+3\dim R,
$$
where $R_{\Perm\c\P}$ is the relation of $\Perm\c\P$.
\end{proof}
Since $\theta$ is equivariant, $\theta^{-1}(3R)$
is a quadratic relation.
The main result of this subsection is as follows.
\begin{proposition}\label{keyalpha}
$\F(\Perm(2)\ot\P(2))/\big(\theta^{-1}(3R)\big)
\cong\Perm\ot\P(\cong\Perm\c\P)$.
\end{proposition}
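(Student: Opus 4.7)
The plan is to construct a natural surjective operad morphism $\Phi$ from the free operad $\F(\Perm(2)\ot\P(2))$ onto $\Perm\ot\P$, verify that it kills the relation subspace $\theta^{-1}(3R)$, and invoke the dimension count of the preceding lemma to conclude that the induced map is an isomorphism.

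First, I would define $\Phi:\F(\Perm(2)\ot\P(2))\to\Perm\ot\P$ by the universal property of the free operad, extending the tautological $S_2$-equivariant inclusion $\Perm(2)\ot\P(2)\hookrightarrow(\Perm\ot\P)(2)$. Since $\Perm$ is generated in arity $2$ by $\langle e^2_1,e^2_2\rangle$ and $\P=\P(E,R)$ is binary quadratic by hypothesis, their diagonal tensor product $\Perm\ot\P$ is generated in arity $2$ under the diagonal composition, so $\Phi$ is surjective at every arity.

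Next I would verify that $\Phi$ vanishes on $\theta^{-1}(3R)$. Under the identifications $[\a x_1,x_2]\leftrightarrow e^2_2\ot[\cdot,\cdot]$ and $[x_1,\a x_2]\leftrightarrow e^2_1\ot[\cdot,\cdot]$, the $\Perm$ composition table of Example \ref{exampleperm} yields the coincidence $e^2_2\c_1 e^2_2=e^3_3=e^2_2\c_1 e^2_1$. Hence the generating averaging relation $[\a[\a x_1,x_2],x_3]^\p-[\a[x_1,\a x_2],x_3]^\p$ maps under $\Phi$ to $\bigl((e^2_2\c_1 e^2_2)-(e^2_2\c_1 e^2_1)\bigr)\ot(\mu^\p\c_1\mu)=0$, and the configurations with $(i,j,k)\in\{(3,1,2),(2,3,1)\}$ are handled identically by $S_3$-equivariance of $\theta$ and $\Phi$. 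For the $3R$ part, any preimage under $\theta^{-1}$ of a relation $r\in R$ is sent to an element of the form $e^3_j\ot r$ in $\Perm(3)\ot\P(3)$, which vanishes because $r=0$ in $\P$.

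Thus $\Phi$ descends to a surjection $\bar\Phi:\F(\Perm(2)\ot\P(2))/(\theta^{-1}(3R))\to\Perm\ot\P$, which is the identity in arity $2$. In arity $3$ the source has dimension $12(\dim E)^2-(3(\dim E)^2+3\dim R)=9(\dim E)^2-3\dim R$, since $\theta^{-1}(3R)$ is $S_3$-stable and so already equals its ideal-closure in arity $3$; by (\ref{dimrel}) the target $\Perm(3)\ot\P(3)$ has the same dimension $3\cdot\bigl(3(\dim E)^2-\dim R\bigr)$, so $\bar\Phi$ is an isomorphism in arity $3$ as well. Since both operads are binary quadratic with generating $S$-module $\Perm(2)\ot\P(2)$ — the source by construction, the target because $\Perm\ot\P\cong\Perm\c\P$ is the Manin white product — and a surjection between binary quadratic operads that is an isomorphism in arities $2$ and $3$ is an isomorphism in every arity (higher-arity relations being generated by the quadratic ones), we conclude that $\bar\Phi$ is the desired isomorphism. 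The main obstacle is the bookkeeping of the second step: ensuring $\Phi$ kills all of $\theta^{-1}(3R)$ rather than just the spanning set displayed before the lemma; this reduces via $S_3$-equivariance to checking one representative per orbit, so the obstacle is cosmetic rather than essential.
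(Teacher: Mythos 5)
Your proposal is correct and follows essentially the same route as the paper: your map $\Phi$ is exactly the paper's $\pi\circ\xi$ extended freely from the identification $[\a x_{1},x_{2}]\leftrightarrow e^{2}_{2}\ot[\cdot,\cdot]$, $[x_{1},\a x_{2}]\leftrightarrow e^{2}_{1}\ot[\cdot,\cdot]$, the vanishing of $Ave$ via $e^{2}_{2}\c_{1}e^{2}_{2}=e^{3}_{3}=e^{2}_{2}\c_{1}e^{2}_{1}$ and of $\theta^{-1}(3R)$ via $e^{3}_{i}\ot R=0$ are the paper's two displayed computations, and the conclusion rests on the same dimension lemma together with Vallette's $\Perm\ot\P\cong\Perm\c\P$. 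You merely spell out the arity-$3$ dimension count and the ``quadratic presentation'' step that the paper leaves implicit.
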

\begin{proof}
We define a bijection
$\xi:\Perm(2)\ot\P(2)\to(\Perm\ot\P)(2)$ by
\begin{eqnarray*}
\ [\a x_{1},x_{2}]&\mapsto&e^{2}_{2}\ot[x_{1},x_{2}],\\
\ [x_{1},\a x_{2}]&\mapsto&e^{2}_{1}\ot[x_{1},x_{2}].
\end{eqnarray*}
Claim. This bijection is equivariant.\\
Hence it induces an operadic isomorphism
$\xi:\F(\Perm(2)\ot\P(2))\cong\F\big((\Perm\ot\P)(2)\big)$.
We show that this isomorphism induces
the isomorphism of the proposition.
It suffices to show that the image of $\theta^{-1}(3R)$
by $\xi$ vanishes on $\Perm\ot\P$.
The relation $Ave$ vanishes in $\Perm\ot\P$, because
\begin{eqnarray*}
\ [\a[\a x_{1},x_{2}],x_{3}]^{\p}&\overset{\xi}{\mapsto}&
e^{2}_{2}\ot[x_{1},x_{2}]^{\p}\c_{1}e^{2}_{2}\ot[x_{1},x_{2}]
\overset{\pi}{\mapsto}
e^{3}_{3}\ot[[x_{1},x_{2}],x_{3}]^{\p},\\
\ [\a[x_{1},\a x_{2}],x_{3}]^{\p}&\overset{\xi}{\mapsto}&
e^{2}_{2}\ot[x_{1},x_{2}]^{\p}\c_{1}e^{2}_{1}\ot[x_{1},x_{2}]
\overset{\pi}{\mapsto}
e^{3}_{3}\ot[[x_{1},x_{2}],x_{3}]^{\p},
\end{eqnarray*}
where $\pi$ is the projection onto $\Perm\ot\P$.\\
Claim. By a direct computation, one can show that
$$
\pi\c\xi\Big(
\theta^{-1}
\big(
[[x_{a},x_{b}],x_{c}]^{\p}\bar{\c}_{j}\a\bar{\c}_{k}\a
\big)\Big)=
e^{3}_{i}\ot[[x_{a},x_{b}],x_{c}]^{\p},
$$
where $(a,b,c),(i,j,k)\in\{(1,2,3),(3,1,2),(2,3,1)\}$.\\
%\begin{proof}
%We show one case.
%\begin{eqnarray*}
%\theta^{-1}
%\Big(
%[[x_{i},x_{j}],x_{k}]^{\p}\bar{\c}_{j}\a\bar{\c}_{k}\a
%\Big)&\equiv&[[x_{i},\a x_{j}],\a x_{k}] \ \ (\text{mod $Ave$})\\
%&\overset{\xi}{\to}&\Big((e^{2}_{1}\ot[x_{1},x_{2}])\c_{1}
%(e^{2}_{1}\ot[x_{1},x_{2}])\Big)\cdot(i,j,k) \ \ (\text{mod $Ave$})\\
%&\overset{\pi}{\to}&e^{3}_{i}\ot[[x_{i},x_{j}],x_{k}],
%\end{eqnarray*}
%where $e^{3}_{i}\cong e^{3}_{1}\cdot(i,j,k)$ is used.
%\end{proof}
Therefore we obtain
$$
\pi\c\xi:
\theta^{-1}(R\bar{\c}_{j}\a\bar{\c}_{k}\a)\mapsto
e^{3}_{i}\ot R.
$$
The image $e^{3}_{i}\ot R$ is zero on $\Perm\ot\P$,
because $R$ is the relation of $\P$.
\end{proof}
We will use this proposition in the next section.
The relation $\theta^{-1}(3R)$ is the basic properties of
the derived brackets.
Hence the operad $\Perm\c\P$ is completely determined
by the properties that the derived brackets satisfy.
%%%%%%%%%%%%%%%%%%%%%%
\subsection{An example}
%%%%%%%%%%%%%%%%%%%%%%
We consider the derived brackets on dg Poisson algebras.
Let $(P,\cdot,\{\cdot,\cdot\},d)$ be a dg Poisson algebra.
We assume that the degrees of the multiplications
are both zero (or even).
We denote by $\Poiss$
the operad of Poisson algebras.
The derived bracket/product are defined by
\begin{eqnarray*}
[sx,sy]_{d}&:=&-(-1)^{|x|}s\{dx,y\},\\
sx*_{d}sy&:=&-(-1)^{|x|}s\big((dx)y\big).
\end{eqnarray*}
From the symmetry of Poisson structures,
we have $[1,2]^{d}=-[2,1]_{d}$ and $1*^{d}2=2*_{d}1$.
Then $sP$ becomes a $\Perm\c\Poiss$-algebra.
The derived bracket is Leibniz (I) and the derived product
$*_{d}$ is perm (II),
and they are satisfying the three conditions below.
\begin{eqnarray}
\label{pp1}
[sx,sy*_{d}sz]_{d}&=&
[sx,sy]_{d}*_{d}sz+(-1)^{(|x|+1)(|y|+1)}sy*_{d}[sx,sz]_{d},\\
\label{pp2}
[sx*_{d}sy,sz]_{d}&=&
sx*_{d}[sy,sz]_{d}+(-1)^{(|x|+1)(|y|+1)}sy*_{d}[sx,sz]_{d},\\
\label{pp3}
[sx,sy]_{d}*_{d}sz&=&-(-1)^{(|x|+1)(|y|+1)}[sy,sx]_{d}*_{d}sz.
\end{eqnarray}
Aguiar (\cite{A}) introduced two new type of algebras,
namely, {\em prePoisson algebras} and {\em dual-prePoisson algebras}.
The latter is the Koszul dual of the former.
A dual-prePoisson algebra is defined to be
an algebra equipped with
two multiplications, $[,]$ and $*$, satisfying
(I), (II), (\ref{pp1}), (\ref{pp2}) and (\ref{pp3}).
Therefore the algebra of the derived bracket/product on
a Poisson algebra is a dual-prePoisson algebra.
From this observation, we obtain
\begin{proposition}
$pre\Poiss^{!}\cong\Perm\c\Poiss$.
\end{proposition}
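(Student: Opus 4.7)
The plan is to present both operads as quotients of the free operad on a common arity-$2$ generating $S_2$-module of dimension $4$, and to match their relations using Proposition \ref{keyalpha} together with the computations in the subsection.

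First, I would canonically identify the four generators of $pre\Poiss^!$ (the two positions of the bracket and the two positions of the product) with the basis $\{e^2_i \ot \mu \mid i \in \{1,2\}, \mu \in \{\{\cdot,\cdot\}, \cdot\}\}$ of $(\Perm \c \Poiss)(2) = \Perm(2) \ot \Poiss(2)$; under the isomorphism of Proposition \ref{keyalpha}, these correspond to the two derived brackets and the two derived products. The identities (I), (II) and (\ref{pp1})--(\ref{pp3}) verified in the subsection hold on every Poisson algebra's shifted derived structure, and therefore, read at the level of the universal construction, they are identities in the operad $\Perm \c \Poiss$ itself. Since both $pre\Poiss^!$ and $\Perm \c \Poiss$ are binary quadratic and their arity-$2$ parts coincide under this identification, this yields a surjective operad morphism $\Phi : pre\Poiss^! \twoheadrightarrow \Perm \c \Poiss$.

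For injectivity, I would invoke Proposition \ref{keyalpha} to present $\Perm \c \Poiss$ as $\F(\Perm(2) \ot \Poiss(2))/(\theta^{-1}(3 R_{\Poiss}))$. The space $\theta^{-1}(3 R_{\Poiss})$ decomposes as the averaging relations $Ave$ together with three copies of each of the three defining Poisson relations (associativity of $\cdot$, the Jacobi identity for $\{\cdot,\cdot\}$, and the Leibniz rule coupling them). My aim is to identify the Jacobi copies with (I), the associativity copies with (II), and the Leibniz copies with (\ref{pp1})--(\ref{pp3}), with the averaging relations $Ave$ being absorbed by the (anti-)commutativity of the underlying Poisson operations, which relates the two derived operations in each pair by a transposition. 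This would give the reverse inclusion of ideals and hence the injectivity of $\Phi$.

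The main obstacle is the bookkeeping in this second step: one must verify that every relation in $\theta^{-1}(3 R_{\Poiss})$ --- three copies of three relations, plus $Ave$ --- lies in the $S_3$-stable ideal generated by (I), (II), (\ref{pp1})--(\ref{pp3}). As a sanity check I would perform the dimension count $\dim (\Perm \c \Poiss)(3) = \dim \Perm(3) \cdot \dim \Poiss(3) = 3 \cdot 6 = 18$ and confirm that $\dim pre\Poiss^!(3) = 18$ as well, which together with the surjectivity of $\Phi$ would suffice to conclude the isomorphism without having to grind through the ideal-generation check explicitly.
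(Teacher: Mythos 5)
Your proposal is correct and follows essentially the same route as the paper: identify the four generators via Proposition \ref{keyalpha}, observe that the dual-prePoisson relations (I), (II), (\ref{pp1})--(\ref{pp3}) are among the relations of $\Perm\c\Poiss$ (i.e.\ lie in $\theta^{-1}(3R_{\Poiss})$), and conclude by a dimension count rather than an explicit ideal-membership check. Your arity-3 count $18=18$ is equivalent to the paper's count of the quadratic relation spaces, $30=(6+9)+(6+6+3)=3(\dim E)^{2}+3\dim R_{\Poiss}$, since both quotients come from the same $48$-dimensional free space $\F(\Perm(2)\ot\Poiss(2))(3)$.
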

\begin{proof}
The relations (I), (II), (\ref{pp1}),
(\ref{pp2}) and (\ref{pp3})
are subrelations of $\theta^{-1}(3R_{\Poiss})$,
because they are the basic properties of the derived
bracket/product.
The dimension of the space of quadratic relations
of $pre\Poiss^{!}$ is $30$:
$$
30=(6+9)+(6+6+3),
$$
where $(6+9)$ is the dimension of the space
generated by (I) and (II),
$(6+6+3)$ is the dimension of the space of
generated by (\ref{pp1}), (\ref{pp2}) and (\ref{pp3}).
On the other hand, since $\dim R_{\Poiss}=6$,
$\dim\theta^{-1}(3R_{\Poiss})=30$.
The proof is completed.
\end{proof}
Since $\Poiss\cong\Poiss^{!}$ and $pre\Lie\cong\Perm^{!}$,
we have $pre\Poiss\cong pre\Lie\bullet\Poiss$.
%%%%%%%%%%%%%%%%%%%%%%%%
\section{Koszul duality theory}
%%%%%%%%%%%%%%%%%%%%%%%%
It was shown in \cite{GK,GK2} Theorem 2.2.6
that the Koszul dual
of the Manin white product $\P_{1}\c\P_{2}$
is the Manin black product, $\P_{1}^{!}\bullet\P_{2}^{!}$
(see also Appendix below).
It is known that the Koszul dual of the operad $\Perm$
is the operad of preLie algebras, $pre\Lie$ (cf. \cite{Chap,Chap2}).
Hence we have
$$
(\Perm\c\P)^{!}=pre\Lie\bullet\P^{!},
$$
which implies that the operad $pre\Lie$ is closely related with
the derived bracket construction.
In this section, we discuss a Koszul duality
for the derived bracket construction.
\medskip\\
\indent
Let $\P$ be a binary quadratic operad
and let $A$ be a $\P$-algebra equipped with
multiplications $(*_{1},...,*_{n})$.
By definition, a Rota-Baxter operator (with weight zero)
on $A$ is a linear endomorphism $\b:A\to A$ satisfying
the Rota-Baxter identity:
$$
\b(x)*_{i}\b(y)=\b(\b(x)*_{i}y+x*_{i}\b(y)),
$$
for any $x,y\in A$ and for any $i$ (cf. \cite{A} \cite{AL}).
For example, the ordinary integral operator
$$
\b(f)(x):=\int^{x}_{0}f(t)dt
$$
is a Rota-Baxter operator on the algebra of functions $C^{0}([0,1])$.
In this sense, the Rota-Baxter operators are considered as
a kind of dual-operators of differential operators.
It is inferred that the integral/Rota-Baxter operators
are the Koszul dual of the differential/averaging operators
from some known observations (see \cite{A} \cite{AL}).
In Theorem \ref{them2} below,
we give definite evidence that this statement is true.
\begin{definition}
Let $\P=\P(E,R)$ be an arbitrary binary
quadratic operad and let $(A,\b)$ be a $\P$-algebra
with a Rota-Baxter operator.
We call the following multiplications
the \textbf{dual-derived products}.
$$
\b(x)*_{i}y \ \ \text{and} \ \ x*_{i}\b(y).
$$
\end{definition}
The original idea of the dual-derived products was given
by Aguiar in \cite{A}.
\begin{theorem}\label{them2}
Let $\P=\P(E,R)$ be an arbitrary binary quadratic operad
and let $\P^{!}(E^{\vee},R^{\bot})$ the Koszul dual
and let $(A,\b)$ a $\P^{!}$-algebra with
a Rota-Baxter operator.
The algebra of the dual-derived products
is a $pre\Lie\bullet\P^{!}$-algebra.
\end{theorem}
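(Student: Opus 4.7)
The plan is to mirror the proof of Theorem \ref{them1} via Koszul duality, using the explicit presentation of $\Perm\c\P$ supplied by Proposition \ref{keyalpha}. Since $(pre\Lie\bullet\P^{!})^{!}=\Perm\c\P$, applying Koszul duality to Proposition \ref{keyalpha} presents $pre\Lie\bullet\P^{!}$ as the quotient of $\F(pre\Lie(2)\ot\P^{!}(2))$ by the orthogonal complement $\theta^{-1}(3R)^{\bot}$ of the averaging relations. Thus, to show that $A$ carries a $pre\Lie\bullet\P^{!}$-structure whose generators are the dual-derived products, it suffices to produce a morphism from this free operad into $\End(A)$ that annihilates these orthogonal relations.

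First, I would set up the dictionary. Mirroring (\ref{def2e}), I write $(pre\Lie\ot\P^{!})(2)$ with the labelled basis $\{[\b x_{1},x_{2}], [x_{1},\b x_{2}]\ |\ [x_{1},x_{2}]\in E^{\vee}\}$, where each basis vector of $E^{\vee}=\P^{!}(2)$ appears in two copies decorated by $\b$ at the left or right leaf; these two copies are Koszul-dual to the copies $[\a x_{1},x_{2}], [x_{1},\a x_{2}]$ appearing in (\ref{def2e}). The candidate representation sends $[\b x_{1},x_{2}]$ to the operation $x_{1},x_{2}\mapsto\b(x_{1})*x_{2}$ and $[x_{1},\b x_{2}]$ to $x_{1},x_{2}\mapsto x_{1}*\b(x_{2})$, where $*$ is the corresponding $\P^{!}$-multiplication on $A$. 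Extending freely, every tree monomial in dual-derived products is interpreted as an element of $\End(A)$.

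Next, I would verify that $\theta^{-1}(3R)^{\bot}$ is annihilated. The orthogonal relations split into two families: (i) the orthogonal complement of $Ave$, and (ii) the orthogonal complement of the three copies of $R$. Family (ii) is straightforward: any quadratic relation in $R^{\bot}\subset\F(E^{\vee})(3)$ already holds for the underlying $\P^{!}$-structure on $A$, and it lifts to a relation among dual-derived product trees by inserting $\b$ at the appropriate leaves. Family (i) is where the Rota-Baxter identity enters: the key computation is that the orthogonal to the averaging combination $[\a[\a x_{i},x_{j}],x_{k}]^{\p}-[\a[x_{i},\a x_{j}],x_{k}]^{\p}$ under the Koszul pairing is, up to the standard Koszul signs, precisely the Rota-Baxter identity $\b(x)*\b(y)-\b(\b(x)*y+x*\b(y))=0$, thereby killing family (i) in $\End(A)$.

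The main obstacle will be the sign-precise Koszul pairing in family (i): explicitly identifying the dual of the averaging relation (\ref{keyidentity}), when expressed in the labelled basis of $(pre\Lie\ot\P^{!})(2)$ and its induced basis in $\F((pre\Lie\ot\P^{!})(2))(3)$, with the Rota-Baxter quadratic element. Once this pairing dictionary is made explicit, the remainder of the argument is formal bookkeeping parallel to Theorem \ref{them1}: the factorised map through $pre\Lie\bullet\P^{!}$ is the required operad representation, which realises the algebra of dual-derived products as a $pre\Lie\bullet\P^{!}$-algebra.
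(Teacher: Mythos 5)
Your skeleton (dualize Proposition \ref{keyalpha}, represent the two generators by $x_{1},x_{2}\mapsto\b(x_{1})*x_{2}$ and $x_{1},x_{2}\mapsto x_{1}*\b(x_{2})$, then check that the quadratic relations of $pre\Lie\bullet\P^{!}$ die in $\End(A)$) is the same as the paper's, but the way you organise the relations contains a genuine error, and it hides exactly the step where the work is. The relation space of $pre\Lie\bullet\P^{!}$ is $\theta^{-1}(3R)^{\bot}=\theta^{\vee}(3R^{\bot})$; since $Ave=\Ker\theta\subset\theta^{-1}(3R)$, \emph{every} dual relation is automatically orthogonal to $Ave$, so there is no separate ``family (i) dual to $Ave$'' to be annihilated --- the relations do not split into $Ave^{\bot}$ plus lifts of $R^{\bot}$ (that would be a sum, whereas the annihilator of $\theta^{-1}(3R)$ is an intersection-type condition, of dimension $3\dim R^{\bot}$, far smaller than $Ave^{\bot}$). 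Moreover the Rota--Baxter identity $\b(x)*\b(y)-\b(\b(x)*y+x*\b(y))$ is not an element of $\F(pre\Lie(2)\ot\P^{!}(2))(3)$ at all: terms such as $\b(x)*\b(y)$, or $\b$ applied by itself, are not composites of the two generators, so it cannot be ``the orthogonal of the averaging combination,'' and family (i) as you describe it cannot be killed because it is not a family of operadic relations in the first place.

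Where the Rota--Baxter hypothesis actually enters is in your ``straightforward'' family (ii). For the copies $\bar{\c}_{j}\b\bar{\c}_{k}\b$ and $\bar{\c}_{k}\b\bar{\c}_{i}\b$ one indeed has $\theta^{\vee}\big((x_{i}*x_{j})*^{\p}x_{k}\,\bar{\c}\,\b\,\bar{\c}\,\b\big)=(x_{i}*\b x_{j})*^{\p}\b x_{k}$, resp.\ $(\b x_{i}*x_{j})*^{\p}\b x_{k}$, i.e.\ insertion of $\b$ at the labelled leaves. But for the copy $\bar{\c}_{i}\b\bar{\c}_{j}\b$, where both labels sit inside the inner bracket, naive insertion would produce $(\b x_{i}*\b x_{j})*^{\p}x_{k}$, which is \emph{not} a tree monomial in the two generators; the condition $<\theta^{\vee}(-),Ave>=0$ forces instead
$$
\theta^{\vee}\Big((x_{i}*x_{j})*^{\p}x_{k}\,\bar{\c}_{i}\b\,\bar{\c}_{j}\b\Big)
=\b(\b x_{i}*x_{j})*^{\p}x_{k}+\b(x_{i}*\b x_{j})*^{\p}x_{k},
$$
the \emph{formal} Rota--Baxter combination. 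Consequently, for $r\in R^{\bot}$ the relation $\theta^{\vee}(r\bar{\c}_{i}\b\bar{\c}_{j}\b)$ is killed by your representation only because, on $A$, the real Rota--Baxter identity converts $\b(\b x_{i}*x_{j})+\b(x_{i}*\b x_{j})$ into $\b x_{i}*\b x_{j}$, after which the expression becomes $r\c_{i}\b\c_{j}\b$ evaluated on the $\P^{!}$-algebra $A$, hence zero. So the hypothesis on $\b$ is needed precisely to dispose of the lifted copies of $R^{\bot}$ --- the part you declared straightforward --- and not to kill a (nonexistent) dual of $Ave$; without this computation of $\theta^{\vee}$ on the third copy your argument does not go through.
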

\begin{proof}
First we study the dual of Proposition \ref{keyalpha}.
We recall (\ref{def2e}) in Section 3.2.
In the same way, we define 2-copies of $E^{\vee}$:
$$
pre\Lie(2)\ot\P^{!}(2):=\langle
\b x_{1}*x_{2},x_{1}*\b x_{2} \ | \ x_{1}*x_{2}\in E^{\vee} 
\rangle,
$$
where $\b$ is a label on copies
(not Rota-Baxter operator).
The $S_{2}$-module structure on $pre\Lie(2)\ot\P^{!}(2)$ is
defined by the natural manner:
$$
(\b x_{1}*x_{2})(12)=\b x_{2}*x_{1}=x_{1}*^{(12)}\b x_{2},
$$
where $*^{(12)}\in E^{\vee}$ is the transposition of $*$.
The operadic duality of $pre\Lie(2)\ot\P^{!}(2)$
and $\Perm(2)\ot\P(2)$ is well-defined by
\begin{eqnarray*}
<\b x_{1}*x_{2},[\a x_{1},x_{2}]>&:=&
<x_{1}*x_{2},[x_{1},x_{2}]>,\\
<x_{1}*\b x_{2},[x_{1},\a x_{2}]>&:=&
<x_{1}*x_{2},[x_{1},x_{2}]>,\\
\text{otherwise}&:=&0,
\end{eqnarray*}
where the pairing of the right-hand side
is the duality of $E^{\vee}$ and $E$.
We set the 3-copies of $\F(E^{\vee})(3)$, like $3\F(E)(3)$,
$$
3\F(E^{\vee})(3):=
\Big(\F(E^{\vee})(3)\bar{\c}_{2}\b\bar{\c}_{3}\b\Big)
\oplus\Big(\F(E^{\vee})(3)\bar{\c}_{3}\b\bar{\c}_{1}\b\Big)
\oplus\Big(\F(E^{\vee})(3)\bar{\c}_{1}\b\bar{\c}_{2}\b\Big).
$$
The duality of $3\F(E)(3)$ and $3\F(E^{\vee})(3)$
is naturally defined.
We recall the projection $\theta$ in Section 3.2.
We consider the dual map of $\theta$:
$$
\F(pre\Lie(2)\ot\P^{!}(2))(3)
\leftarrow 3\F(E^{\vee})(3):\theta^{\vee}.
$$
The image of $3R^{\bot}$, $\theta^{\vee}(3R^{\bot})$,
is a quadratic relation in $\F(2E^{\vee})(3)$.
As a corollary of Proposition \ref{keyalpha}, we obtain
\begin{lemma}
$\F(pre\Lie(2)\ot\P^{!}(2))/\big(\theta^{\vee}(3R^{\bot})\big)
\cong pre\Lie\bullet\P^{!}$.
\end{lemma}
By a direct computation, we obtain
\begin{eqnarray*}
\label{bet2}\theta^{\vee}\Big(
(x_{i}*x_{j})*^{\p}x_{k}\bar{\c}_{j}\b\bar{\c}_{k}\b
\Big)
&=&(x_{i}*\b x_{j})*^{\p}\b x_{k},\\
\label{bet3}\theta^{\vee}\Big(
(x_{i}*x_{j})*^{\p}x_{k}\bar{\c}_{k}\b\bar{\c}_{i}\b
\Big)
&=&(\b x_{i}*x_{j})*^{\p}\b x_{k},
\end{eqnarray*}
and from the condition, $<\theta^{\vee}(-),Ave>=0$,
we get the {\em formal} Rota-Baxter identity,
\begin{equation*}\label{bet1}
\theta^{\vee}\Big(
(x_{i}*x_{j})*^{\p}x_{k}\bar{\c}_{i}\b\bar{\c}_{j}\b
\Big)
=\b(\b x_{i}*x_{j})*^{\p}x_{k}+\b(x_{i}*\b x_{j})*^{\p}x_{k}.
\end{equation*} 
Now we prove the theorem.
Let $r\in R^{\bot}$ be a quadratic relation of $\P^{!}$.
Then one can write
$$
r=\sum_{(*,*^{\p})}C_{1}(x_{1}*x_{2})*^{\p}x_{3}+
C_{2}(x_{3}*x_{1})*^{\p}x_{2}+C_{3}(x_{2}*x_{3})*^{\p}x_{1},
$$
where $C_{1},C_{2},C_{3}$ are structure constants.
We obtain
\begin{multline*}
\theta^{\vee}(r\bar{\c}_{1}\b\bar{\c}_{2}\b)=
\sum_{(*,*^{\p})}C_{1}\b(\b x_{1}*x_{2})*^{\p}x_{3}+
C_{1}\b(x_{1}*\b x_{2})*^{\p}x_{3}+\\
C_{2}(x_{3}*\b x_{1})*^{\p}\b x_{2}+C_{3}(\b x_{2}*x_{3})*^{\p}\b x_{1}.
\end{multline*}
If $\b$ is a {\em real} Rota-Baxter operator, that is,
if the relation is represented on the algebra of
the dual-derived products, then
\begin{eqnarray*}
\theta^{\vee}(r\bar{\c}_{1}\b\bar{\c}_{2}\b)&\overset{rep}{\to}&
\sum_{(*,*^{\p})}C_{1}(\b x_{1}*\b x_{2})*^{\p}x_{3}+
C_{2}(x_{3}*\b x_{1})*^{\p}\b x_{2}+C_{3}(\b x_{2}*x_{3})*^{\p}\b x_{1}\\
&=&r\c_{1}\b\c_{2}\b=0 \ \ \text{(on $A$)}.
\end{eqnarray*}
This holds for each
$\theta^{\vee}(r\bar{\c}_{i}\b\bar{\c}_{j}\b)$,
$(i,j)\in\{(1,2),(2,3),(3,1)\}$.
The proof of the theorem is completed.
\end{proof}
Since $\Lie\bullet\P\cong\P$,
we call $pre\Lie\bullet\P$-algebras simply $pre\P$-algebras.
\begin{corollary}(\cite{Val} Proposition 22).
Let $A$ be a $pre\P$-algebra equipped with
multiplications $\b x_{1}*_{i}x_{2}$ and $x_{1}*_{i}\b x_{2}$,
where $i\in\{1,...,\dim\P(2)\}$.
We define a commutator associated with $*_{i}$,
$$
\{x_{1},x_{2}\}_{i}:=\b x_{1}*_{i}x_{2}+x_{1}*_{i}\b x_{2}.
$$
The algebra of the commutators is a $\P$-algebra.
\end{corollary}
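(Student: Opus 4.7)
The plan is to construct an operad morphism $\P\to pre\P$ sending each binary generator $*_i\in\P(2)$ to the commutator $\{\cdot,\cdot\}_i=\b(\cdot)*_i(\cdot)+(\cdot)*_i\b(\cdot)\in pre\P(2)$. Composing this morphism with the given $pre\P$-action on $A$ will yield the claimed $\P$-algebra structure, so it suffices to verify that for every quadratic relation $r\in R$ of $\P$ the commutator substitution $r(\{,\},\ldots)$ vanishes in $pre\P(3)$.

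By the lemma following Theorem \ref{them2}, applied with $\P$ and $\P^!$ exchanged (valid because $\P^{!!}=\P$ and $(R^\bot)^\bot=R$), the ideal of relations of $pre\P=pre\Lie\bullet\P$ is generated by $\theta^\vee(r\,\bar{\c}_i\b\bar{\c}_j\b)$ for $r\in R$ and $(i,j)\in\{(1,2),(2,3),(3,1)\}$. The central calculation is to expand a single ternary monomial under the commutator substitution: writing $\{x,y\}_i=\b x*_iy+x*_i\b y$ and expanding produces four formal terms,
\begin{eqnarray*}
\{\{x_1,x_2\}_i,x_3\}^\p_j &=& \b(\b x_1*_ix_2)*^\p_jx_3+\b(x_1*_i\b x_2)*^\p_jx_3\\
&& +\,(\b x_1*_ix_2)*^\p_j\b x_3+(x_1*_i\b x_2)*^\p_j\b x_3.
\end{eqnarray*}
The $\theta^\vee$-identities recorded in the proof of Theorem \ref{them2} partition these into three blocks, one per pair of positions for $\b$: the first two terms combine as $\theta^\vee((x_1*x_2)*^\p x_3\,\bar{\c}_1\b\bar{\c}_2\b)$, the third is $\theta^\vee((x_1*x_2)*^\p x_3\,\bar{\c}_3\b\bar{\c}_1\b)$, and the fourth is $\theta^\vee((x_1*x_2)*^\p x_3\,\bar{\c}_2\b\bar{\c}_3\b)$. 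Summing this decomposition over the three cyclic monomials of a generic $r\in R$ gives
$$
r(\{,\},\ldots)=\theta^\vee(r\,\bar{\c}_1\b\bar{\c}_2\b)+\theta^\vee(r\,\bar{\c}_2\b\bar{\c}_3\b)+\theta^\vee(r\,\bar{\c}_3\b\bar{\c}_1\b),
$$
each summand of which vanishes in $pre\P$, as required.

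The main subtlety is the bookkeeping that matches the four-term commutator expansion to exactly three $\theta^\vee$-identities and that correctly aggregates contributions over the cyclic monomial positions $(a,b,c)\in\{(1,2,3),(3,1,2),(2,3,1)\}$ appearing in a generic $r$. Once this identification is performed, the conclusion follows immediately from the description of the ideal of relations of $pre\P$; no sign complications arise because $\b$ is treated as a formal symbol of degree zero in the dual-derived setting, in contrast to the shifted derived-bracket calculation of Theorem \ref{them1}.
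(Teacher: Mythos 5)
Your proposal is correct and follows essentially the same route as the paper: the paper's proof is exactly the identity $\sum_{(i,j)\in\{(1,2),(2,3),(3,1)\}}\theta^{\vee}(r\bar{\c}_{i}\b\bar{\c}_{j}\b)=r(\{\cdot,\cdot\}_{1},\{\cdot,\cdot\}_{2},\dots)=0$, relying on the lemma describing the relations of $pre\Lie\bullet\P$ via $\theta^{\vee}$, just as you do. Your four-term expansion matching the three $\theta^{\vee}$-blocks is merely a more explicit write-up of the same calculation.
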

\begin{proof}
Let $r=r(*_{1},*_{2},...)$ be a $\P$-algebra relation
composed of $\P$-algebra multiplications.
We have
$$
\sum_{(i,j)\in\{(1,2),(2,3),(3,1)\}}
\theta^{\vee}(r\bar{\c}_{i}\b\bar{\c}_{j}\b)
=r(\{\cdot,\cdot\}_{1},\{\cdot,\cdot\}_{2},...)=0.
$$
\end{proof}
The corollary above is considered to be a dual of
Proposition \ref{absubal},
because Proposition \ref{absubal} relies on
the projection $\Perm\to\Com$ and the above corollary
relies on the dual map $pre\Lie\leftarrow\Lie$.
%%%%%%%%%%%%%%%%%%
\section{Appendix}
%%%%%%%%%%%%%%%%%%
\textbf{A1. White products}.
Let $\P_{i}$ be a binary quadratic operad over
$(E_{i},R_{i})$, $i\in\{1,2\}$.
\begin{definition}
(White products
\cite{GK,GK2} Section 2.2;
\cite{Val} Section 3.2)
$$
\P_{1}\c\P_{2}:=\frac{\F(E_{1}\ot E_{2})}
{\Phi^{-1}\big(R_{1}\ot\F(E_{2})(3)+\F(E_{1})(3)\ot R_{2}\big)},
$$
where $\Phi$ is the unique operad morphism
in the universal diagram below.
$$
\begin{CD}
E_{1}\ot E_{2}@>{\text{universal arrow}}>>\F(E_{1}\ot E_{2}) \\
@| @V{\Phi}VV \\
E_{1}\ot E_{2}@>>>\F(E_{1})\ot\F(E_{2}).
\end{CD}
$$
\end{definition}
\noindent
\textbf{A2. Operadic dual spaces}.
Let $E$ be an $S_{n}$-module.
The operadic dual of $E$,
which is denoted by $E^{\vee}$,
is a dual space of $E$
whose duality is defined by the pairing:
$$
<e,f>=sgn(\sigma)<e\sigma,f\sigma>,
$$
where $e\in E$, $f\in E^{\vee}$ and $\sigma\in S_{n}$.
We consider the case of $n=2$.
Let $e_{i}$ be a noncommutative
multiplication and let $e^{(12)}_{i}$ the transposition
of $e_{i}$ and let $e^{\pm}_{j}$ be
commutative $(+)$, anti-commutative $(-)$
multiplications, respectively.
One can regard the duality as the pseudo-Euclidean metric
defined by
\begin{eqnarray*}
<e_{i},e_{j}>&=&\delta_{ij},\\
<e^{(12)}_{i},e^{(12)}_{j}>&=&-\delta_{ij},\\
<e^{+}_{i},e^{-}_{j}>&=&\delta_{ij},\\
<e^{-}_{i},e^{+}_{j}>&=&\delta_{ij},
\end{eqnarray*}
and all others $0$.
The invariancy of the pairing above
is built in the metric.
\medskip\\
\noindent
\textbf{A3. Koszul dual}.
One can identify
$\F(E)(3)$ with the 3-copies $3(E\ot E)$.
This isomorphism is defined by
$$
e\c_{1}e^{\p}(x_{i},x_{j},x_{k})\cong e\ot e^{\p}\ot(i,j,k),
$$
where $(i,j,k)\in\{(1,2,3),(3,1,2),(2,3,1)\}$.
For instance,
$$
[[x_{i},x_{k}],x_{j}]^{\p}=[[x_{k},x_{i}]^{(12)},x_{j}]^{\p}
\cong[x_{1},x_{2}]^{\p}\ot[x_{1},x_{2}]^{(12)}\ot(k,i,j).
$$
The duality of $E$ and $E^{\vee}$ is naturally extended
to the duality of $\F(E)(3)$ and $\F(E^{\vee})(3)$,
via the isomorphism. More explicitly,
the operadic duality of $\F(E)(3)$ and $\F(E^{\vee})(3)$
is well-defined by
$$
<e\ot e^{\p}\ot(i,j,k),
f\ot f^{\p}\ot(a,b,c)>
:=<e,f><e^{\p},f^{\p}>\delta_{ia}\delta_{jb}\delta_{kc}.
$$
This pairing is regarded as a metric.
\begin{definition}
Let $R$ be an $S_{3}$-subspace of $\F(E)(3)$, i.e.,
quadratic relation
and let $R^{\bot}\subset\F(E^{\vee})(3)$
the orthogonal space of $R$
with respect to the operadic duality.
The Koszul dual of $\P:=\F(E)/(R)$
is defined to be the operad
$$
\P^{!}:=\F(E^{\vee})/(R^{\bot}).
$$
\end{definition}
Since $R^{\bot\bot}\cong R$, we have $\P^{!!}\cong\P$.
\medskip\\
\noindent
\textbf{A4. Black products}.
We consider the dual map of $\Phi$ on the third component.
$$
\F(E_{1}^{\vee}\ot E_{2}^{\vee})(3)\leftarrow
\F(E_{1}^{\vee})(3)\ot\F(E_{2}^{\vee})(3):\Psi,
$$
where $\Psi=\Phi(3)^{\vee}$.
\begin{definition}
(Black product
\cite{GK,GK2} Section 2.2;
\cite{Val} Section 4.3)
$$
\P^{!}_{1}\bullet\P^{!}_{2}:=\frac{\F(E^{\vee}_{1}\ot E^{\vee}_{2})}
{\Psi\big(R^{\bot}_{1}\ot\F(E^{\vee}_{2})(3)
\cap\F(E^{\vee}_{1})(3)\ot R^{\bot}_{2}\big)}.
$$
\end{definition}
Since $\P^{!!}\cong\P$, the black product
is defined for any binary quadratic operads.
It is obvious that
$(\P_{1}\c\P_{2})^{!}\cong\P^{!}_{1}\bullet\P^{!}_{2}$,
because
$\Psi\big(R^{\bot}_{1}\ot\F(E^{\vee}_{2})(3)
\cap\F(E^{\vee}_{1})(3)\ot R^{\bot}_{2}\big)$
is the orthogonal space of
$\Phi^{-1}\big(R_{1}\ot\F(E_{2})(3)+\F(E_{1})(3)\ot R_{2}\big)$.

\noindent
Science University of Tokyo.\\
Wakamiya 26 Shinjuku Tokyo Japan.\\
e-mail: K\underline{ }Uchino[at]oct.rikadai.jp

\end{document}